\newtheorem{theorem}{Theorem}[section]
\newtheorem{proposition}[theorem]{Proposition}
\newtheorem{lemma}[theorem]{Lemma}
\newtheorem{corollary}[theorem]{Corollary}
\theoremstyle{definition}
\newtheorem{remark}[theorem]{Remark}
\newtheorem{note}[theorem]{Note}
\numberwithin{equation}{section}
\newcommand{\Z}{\mathbb{Z}}
\newcommand{\Ftwo}{\mathbb{F}_2}
\newcommand{\A}{\mathcal{A}}
\newcommand{\G}{G_2}
\newcommand{\BG}{B\G}
\newcommand{\BGk}{B\mathcal{G}_k}
\def\DD{D\kern-.7em\raise0.4ex\hbox{\char '55}\kern.33em}
\title[Low-degree cohomology of $G_2$-gauge groups]{Low-degree mod 2 cohomology of classifying spaces of $G_2$-gauge groups}
\author{Ph\'uc V\~o \DD\d{\u a}ng} 
\address{Department of Mathematics, FPT University, Quy Nhon AI Campus, An Phu Thinh New Urban Area, Vietnam}
\email{dangphuc150488@gmail.com, phucdv14@fpt.edu.vn}
\thanks{ORCID: \url{https://orcid.org/0000-0002-6885-3996}}
\keywords{Gauge groups, Classifying spaces, Steenrod algebra, Serre spectral sequence}
\subjclass[2020]{Primary 55R35; Secondary 55S10, 55P35, 55T10, 55T20}
\begin{document}
\maketitle

\begin{abstract}
Let $G$ be a simply connected compact simple Lie group and let
$\mathcal{G}_k$ denote the gauge group of a principal $G$--bundle over
$S^4$ with second Chern class $k\in \pi_4(BG)\cong \Z$. For $G=G_2$,
the $p$--local homotopy types of the gauge groups have been completely
classified through the work of Kishimoto--Theriault--Tsutaya and
Kameko, in terms of the order of the fundamental Samelson product
$\langle i_3,1\rangle\in [\Sigma^3G_2,G_2]$.

In this paper, we begin a complementary study of the mod $2$
cohomology of the classifying spaces $B\mathcal{G}_k(G_2)$. Our goal
is to understand the structure of $H^*(B\mathcal{G}_k;\Ftwo)$ as an
unstable module over the mod~$2$ Steenrod algebra in a low range of
degrees. Using the evaluation fibration
\[
\Omega_0^3 G_2 \longrightarrow B\mathcal{G}_k
\xrightarrow{\;\mathrm{ev}\;} BG_2
\]
and the associated Serre spectral sequence, together with low-degree
information on $\Omega^3_0G_2$, we study
\[
H^s(BG_2;H^t(\Omega^3_0G_2)) \Longrightarrow H^{s+t}(B\mathcal{G}_k)
\]
in total degree $\le 10$. A careful analysis of the homotopy groups of
$G_2$ shows that
\[
H^j(\Omega^3_0G_2;\Ftwo)=0\quad\text{for }1\le j\le 4,
\qquad
H^5(\Omega^3_0G_2;\Ftwo)\cong \Ftwo,
\]
so the least positive degree in which the fibre cohomology is nonzero
is $5$. As a consequence, there is a distinguished generator
\[
u_5\in H^5(\Omega^3_0G_2;\Ftwo)
\]
such that, in total degree $\le 10$, the only possible Serre
differential with source $u_5$ is a $d_6$--differential
\[
d_6(u_5)=\varepsilon(k)\,x_6,
\]
where $x_6\in H^6(BG_2;\Ftwo)$ is the degree-$6$ generator and
$\varepsilon(k)\in\Ftwo$ records the contribution of the bundle class
to this first nontrivial fibre differential. \textcolor[rgb]{1.0,0.0,0.0}{In addition, $2$--locally
we prove that $\varepsilon(k)$ is $8$--periodic in $k$ (i.e. it
depends only on $k\bmod 8$) and that $\varepsilon(k)=0$ for all $k\equiv 0\pmod 8$.}
\end{abstract}

\section{Introduction}

\subsection{Gauge groups and their homotopy types}

Let $G$ be a compact, connected Lie group, let $X$ be a pointed CW
complex, and let $P\to X$ be a principal $G$--bundle.  The gauge
group $\mathcal{G}(P)$ is the topological group of $G$--equivariant
bundle automorphisms of $P$ that cover the identity on $X$.  When $X$
is finite, Crabb--Sutherland showed that there are only finitely many
homotopy types among the gauge groups $\mathcal{G}(P)$ as $P$ ranges
over all principal $G$--bundles over $X$~\cite{CrabbSutherland}.  In
the fundamental case $X=S^4$ and $G$ simply connected and simple,
isomorphism classes of $G$--bundles are classified by
$\pi_4(BG)\cong \Z$, and we write $\mathcal{G}_k(G)$ for the
gauge group of the bundle classified by $k\in \Z$.

The problem of determining the homotopy types of these gauge groups
for specific Lie groups $G$ has received sustained attention; see
for example \cite{KishimotoTheriaultTsutayaG2,KonoSU2,TheriaultSp2}.  In many cases
the classification is governed by the order of a fundamental Samelson
product in~$G$.

The case $G=\G$ is particularly interesting because of the exceptional
nature of $G_2$.  Kishimoto, Theriault and Tsutaya ~\cite{KishimotoTheriaultTsutayaG2} proved that the
classification for $\G$--gauge groups reduces to determining the
order of the Samelson product
\[
\langle i_3,1\rangle\in [\Sigma^3\G,\G],
\]
where $i_3\colon S^3\to \G$ is the inclusion of the bottom cell and
$1$ denotes the identity map.
Moreover, their work determines the number of homotopy types up to one
factor of $2$~\cite{KishimotoTheriaultTsutayaG2}.  \textcolor[rgb]{1.0,0.0,0.0}{Kameko subsequently
resolved the remaining $2$--primary ambiguity by proving that the order
of $\langle i_3,1\rangle$ is exactly $168$
\cite[Theorem~1.4]{KamekoG2gauge}.
Consequently, the $p$--local homotopy classification of
$\mathcal{G}_k(\G)$ is determined by the greatest common divisor
$(k,168)$; at the prime $2$, it is determined by $(k,8)$
\cite[Theorem~1.1 and Proposition~1.2]{KamekoG2gauge}.}
Thus the homotopy types of the
gauge groups are completely understood up to $p$--localization.  It is
natural to ask how far this information is reflected in the
(co)homology of their classifying spaces.

\subsection{Cohomology of $B\mathcal{G}_k$ and the Steenrod algebra}

Gottlieb identified the classifying space of $\mathcal{G}_k$ with a
component of a mapping space~\cite{GottliebGauge}: specifically,
$B\mathcal{G}_k$ is homotopy equivalent to $\mathrm{Map}_k(S^4,BG)$,
the path component of $\mathrm{Map}(S^4,BG)$ containing a classifying map
representing $k\in \pi_4(BG)\cong \Z$.
This identification yields a fundamental evaluation fibration
\[
\Omega^3_0 G \;\longrightarrow\; B\mathcal{G}_k
\xrightarrow{\;\mathrm{ev}\;} BG.
\]
For $G=\G$, this expresses $B\mathcal{G}_k$ as a twisted
mapping space over the base $\BG$.

The mod~$2$ cohomology of $\BG$ is classically known: Borel showed that
\[
H^*(\BG;\Ftwo) \cong \Ftwo[x_4,x_6,x_7]
\]
is a polynomial algebra on generators in degrees $4,6,7$
\cite{BorelCohomology}.  The action of the Steenrod algebra $\A$ on
$H^*(\BG)$ can be described explicitly via Wu formulas, and from the
viewpoint of the hit problem one is interested in minimal
generating sets for the unstable $\A$--module $\Ftwo[x_4,x_6,x_7]$.

By contrast, there seems to be no published description of the full
mod~$2$ cohomology ring $H^*(B\mathcal{G}_k;\Ftwo)$ for nontrivial
$G_2$--bundles over $S^4$, nor of its structure as an unstable
$\A$--module.  There are, however, important partial precedents.
Choi studied the mod~$p$ homology of classifying spaces of
$\mathrm{Sp}(n)$--gauge groups over $S^4$ \cite{ChoiSpnBG}, and he also
computed the homology of the $G_2$--gauge groups themselves using
Eilenberg--Moore and Serre spectral sequences
\cite{ChoiG2GaugeHomology}.  Our aim here is to give a precise
structural description of $H^*(B\mathcal{G}_k;\Ftwo)$ in low degrees,
in terms of the evaluation fibration and the first nontrivial
cohomology of the triple loop space $\Omega^3_0\G$.

\subsection{Main result: a low-degree description}

We work $2$--locally throughout and write $H^*(-)$ for
$H^*(-;\Ftwo)$.  Consider the Serre spectral sequence of the
evaluation fibration
\begin{equation}\label{eq:Serre-SS}
E_2^{s,t}(k)\;\cong\; H^s(\BG;H^t(\Omega^3_0\G))
\;\Longrightarrow\; H^{s+t}(B\mathcal{G}_k).
\end{equation}
The $E_2$--term is a module over $H^*(\BG)$ and an unstable module
over the Steenrod algebra. The twisting of this spectral sequence depends on $k$ through the
connecting map
\[
\partial_k\colon \G \longrightarrow \Omega^3_0 \G,
\]
which, by Lang's description of the boundary map of the evaluation
fibration and its formulation in the gauge-group setting
\cite{KishimotoTheriaultTsutayaG2,LangEvaluation}, is the triple adjoint
of the Samelson product $\langle k\cdot i_3,1\rangle$.
\textcolor[rgb]{1.0,0.0,0.0}{Kameko's calculation that the $2$--primary order of
$\langle i_3,1\rangle$ is $8$ implies that, after localization at $2$,
the map $\partial_k$ depends only on $k\bmod 8$
\cite[Proposition~1.3]{KamekoG2gauge}.}

The cohomology of the fibre $\Omega^3_0 G$ can in principle be
computed by iterated Eilenberg--Moore spectral sequences applied to
the path-loop fibrations of $BG$ and $G$.
Choi and Yoon conjectured that, for any simply connected finite
$H$--space $X$, the Eilenberg--Moore spectral sequences converging
to the mod $p$ homology of $\Omega^2 X$ and $\Omega^3 X$ collapse at
$E_2$~\cite{ChoiYoonEMSS}.  This conjecture was subsequently proved by
Lin~\cite{LinEMSS}; in particular, for any compact simple Lie group
$G$ and any prime $p$, the Eilenberg--Moore spectral sequences computing
$H_*(\Omega^2 G;\mathbb F_p)$ and $H_*(\Omega^3 G;\mathbb F_p)$
collapse at $E_2$.  Combined with the classical description of
$H^*(\BG)$, this implies that $H^*(\Omega^3_0\G)$ is of finite type in
each degree and that its generators occur in bounded degrees.  For our
purposes, however, we only need to know the very first nontrivial
cohomology group of $\Omega^3_0\G$.

Let
\[
M \;=\; H^*(\Omega^3_0\G;\Ftwo)
\]
and write $M^{\le N}=\bigoplus_{j\le N}M^j$ for its truncation in
degrees $\le N$.  Our main theorem describes the Serre spectral
sequence \eqref{eq:Serre-SS} in total degree $\le 10$ and singles out
a distinguished scalar $\varepsilon(k)\in\Ftwo$ which already
captures the effect of $k$ on the first nontrivial fibre class.

\begin{theorem}[Low-degree structure of $H^*(B\mathcal{G}_k)$]\label{thm:main}
Work $2$--locally and fix an integer $k$.  Consider the Serre spectral
sequence \eqref{eq:Serre-SS} for the fibration
$\Omega^3_0\G\to B\mathcal{G}_k\to \BG$, and let
$M=H^*(\Omega^3_0\G;\Ftwo)$.
\begin{enumerate}[label=\textup{(\roman*)}]
  \item In total degree $j=s+t\le 10$ the $E_2$--term is naturally
  isomorphic, as a bigraded $H^*(\BG)$--module, to
  \[
  E_2^{*,*}(k)^{\le 10} \;\cong\;
  \bigl(H^*(\BG)\otimes M\bigr)^{\le 10},
  \]
  where the superscript ${}^{\le 10}$ denotes truncation in total degree.
  Equivalently, $E_2^{s,t}(k)\cong H^s(\BG)\otimes M^t$ for $s+t\le 10$.

  \item Using the low-degree homotopy groups of $\G$ together with
  integral and $2$--local Hurewicz arguments, one has
  \[
  M^j = H^j(\Omega^3_0\G;\Ftwo)=0\quad\text{for }1\le j\le 4,
  \qquad
  M^5\cong \Ftwo.
  \]
  In particular, there exists a generator
  \[
  u_5 \in M^5\cong E_2^{0,5}(k)
  \]
  and $u_5$ is a generator of least positive degree in
  $H^*(\Omega^3_0\G)$.

  \item In total degree $j\le 10$, all differentials
  \[
  d_r(u_5)\colon E_r^{0,5}(k)\longrightarrow E_r^{r,\,5-r+1}(k)
  \]
  vanish for $2\le r\le 5$ and for $r\ge 7$, and the only page on
  which $u_5$ can support a nontrivial differential is $r=6$.
  After choosing a generator
  $x_6\in H^6(\BG)\cong E_2^{6,0}(k)$ and the generator
  $u_5\in M^5\cong E_2^{0,5}(k)$, there is a uniquely determined scalar
  $\varepsilon(k)\in\Ftwo$ such that
  \[
  d_6(u_5)=\varepsilon(k)\,x_6
  \]
  in $E_6^{6,0}(k)$.

  \item Let $U\subset E_2^{*,*}(k)$ denote the $H^*(\BG)$--submodule
  generated by $u_5$.  In total degree $j\le 10$ all differentials
  $d_r$ with $2\le r\le 5$ vanish on $U$, and every differential
  \[
  d_6\colon E_6^{s,5}(k)\longrightarrow E_6^{s+6,0}(k)
  \]
  with $s+5\le 10$ satisfies
  \[
  d_6(x\cdot u_5) = x\cdot d_6(u_5)
  \qquad\text{for }x\in H^s(\BG).
  \]
  In particular, within $U$ the entire effect of the Serre
  differentials in total degree $\le 10$ is determined by the single
  scalar $\varepsilon(k)\in\Ftwo$.

  \item For each $j\le 10$, the spectral sequence induces a finite
  filtration
  \[
  0=F^{j+1}H^j(B\mathcal{G}_k)\subseteq F^jH^j(B\mathcal{G}_k)\subseteq
  \cdots \subseteq F^0H^j(B\mathcal{G}_k)=H^j(B\mathcal{G}_k)
  \]
  whose associated graded object is
  \[
  \mathrm{gr}\,H^j(B\mathcal{G}_k)\cong \bigoplus_{s+t=j}E_\infty^{s,t}(k).
  \]
  In total degree $j\le 10$, the contribution to $E_\infty^{*,*}(k)$
  coming from the $H^*(\BG)$--submodule generated by $u_5$ is explicitly
  determined by the single scalar $\varepsilon(k)\in\Ftwo$.

\end{enumerate}
In particular, in total degree $j\le 10$ the dependence of
$H^j(B\mathcal{G}_k;\Ftwo)$ on the bundle class $k$ that is visible
through the first nontrivial fibre class $u_5$ is completely encoded
in the single Serre differential $d_6(u_5)=\varepsilon(k)x_6$.
\end{theorem}

\begin{remark}\label{rem:structural-reduction}
Theorem~\ref{thm:main} should be viewed as a structural reduction.  It
does not claim a closed formula for $\varepsilon(k)$, nor does it
assert that $\varepsilon(k)$ is nontrivial.  
\textcolor[rgb]{1.0,0.0,0.0}{Kameko's calculation shows that the $2$--primary order of
$\langle i_3,1\rangle$ is $8$, while the $2$--local homotopy
classification of the gauge groups is indexed by $(k,8)$
\cite[Proposition~1.2 and Proposition~1.3]{KamekoG2gauge}.
It is therefore natural to expect that $\varepsilon(k)$ is governed by
the same $2$--primary data.}

\textcolor[rgb]{1.0,0.0,0.0}{In fact, we prove (Lemma~\ref{lem:epsilon-mod4}) that, $2$--locally, the
scalar $\varepsilon(k)$ depends only on the residue class of $k$ modulo
$8$, and moreover $\varepsilon(k)=0$ whenever $8\mid k$
(Corollary~\ref{cor:epsilon-4div}).  Thus the argument given here
reduces the remaining low-degree problem to determining the seven values
$\varepsilon(1),\ldots,\varepsilon(7)$.  Although Kameko's
classification identifies the $2$--local homotopy types of the gauge
groups by the invariant $(k,8)$, the proof below uses the stronger
condition that the connecting maps agree, namely congruence modulo $8$.
Any further reduction among these residue classes would require an
additional fibrewise comparison argument, which we do not establish here.}

Determining these values explicitly would require a more detailed
understanding of $H^*(\Omega^3_0\G)$ and of the induced map of the
connecting map $\partial_k$ on mod~$2$ cohomology, and we leave this as
a natural problem for future work.
\end{remark}

\begin{remark}[Applications and significance]\label{rem:applications}
Although Theorem~\ref{thm:main} is a low-degree result, it has several concrete applications.

First, it provides a first cohomological manifestation of the dependence of the
classifying spaces $B\mathcal{G}_k$ on the bundle class $k$, through the first
nontrivial fibre differential. In this sense, the scalar
$\varepsilon(k)\in \Ftwo$ may be viewed as a low-degree cohomological shadow
of the $2$--local homotopy classification of the gauge groups
$\mathcal{G}_k(\G)$.

Second, the theorem reduces the first $k$--dependent part of the low-degree
analysis of $H^*(B\mathcal{G}_k;\Ftwo)$ to two more concrete problems:
determining the next stages of the fibre cohomology
$H^*(\Omega_0^3\G;\Ftwo)$ beyond degree $5$, and determining the values of the
scalar $\varepsilon(k)$. Thus the remaining difficulty becomes much more
explicit and the theorem provides a practical framework for further
calculations.

Third, since our goal is to study $H^*(B\mathcal{G}_k;\Ftwo)$ as an unstable
module over the Steenrod algebra, the present low-degree description gives the
first structural input for any future analysis of generators, Steenrod
operations, and hit-theoretic questions for these classifying spaces.

Finally, because $\mathcal{G}_k\simeq \Omega B\mathcal{G}_k$, the low-degree
information obtained here is also relevant to comparisons with the mod~$2$
homology of the gauge groups themselves, and may serve as a starting point for
further work relating the classifying-space approach of the present paper to
the Eilenberg--Moore and Serre spectral sequence calculations of
Choi~\cite{ChoiG2GaugeHomology}.

In particular, the theorem isolates the first possible obstruction to the
survival of the degree-$6$ base class in the evaluation Serre spectral
sequence, and thus identifies the first place where the bundle class can affect
the mod~$2$ cohomology of $B\mathcal{G}_k$ through the first nontrivial fibre
class.
\end{remark}

\medskip

The paper is organised as follows.  In Section~\ref{sec:prelim}, we
review the necessary background on gauge groups, Samelson products,
the mod~$2$ cohomology of $\BG$ and the Steenrod algebra.  In
Section~\ref{sec:SS-setup}, we describe the evaluation fibration and
its Serre spectral sequence, and we recall the relationship between
the connecting map and the Samelson product. Finally, Section~\ref{sec:low}
contains the proof of Theorem~\ref{thm:main}, including a detailed
analysis of the bidegrees for $j\le 10$ and the vanishing of
$H^j(\Omega^3_0\G)$ for $1\le j\le 4$.

\section{Preliminaries}\label{sec:prelim}

\subsection{Gauge groups and mapping spaces}

Let $G$ be a compact, connected Lie group and $P\to S^4$ a principal
$G$--bundle.  The associated gauge group $\mathcal{G}(P)$ is the group
of $G$--equivariant automorphisms of $P$ covering the identity on
$S^4$, with the compact-open topology.  When $G$ is simply connected
and simple, its classifying space $BG$ is $3$--connected and
$\pi_4(BG)\cong \Z$; the integer $k\in \Z$ corresponding to $[P]$
plays the role of the second Chern class.

Following the notation of \cite{KamekoG2gauge,KishimotoTheriaultTsutayaG2}, we write
$\mathcal{G}_k=\mathcal{G}_k(G_2)$ for the gauge group of the principal
$G_2$--bundle classified by $k\in \pi_4(BG_2)\cong \Z$.
By a classical result of Gottlieb \cite{GottliebGauge}, the classifying space $B\mathcal{G}_k$
is homotopy equivalent to $\mathrm{Map}_k(S^4, BG_2)$, the connected component of the
mapping space $\mathrm{Map}(S^4, BG_2)$ corresponding to the class
$k\in \pi_4(BG_2)\cong \Z$.

Under this identification, the evaluation map $\mathrm{ev}\colon \mathrm{Map}(S^4, BG_2) \to BG_2$
restricts to a fibration
\[
\mathrm{Map}_*(S^4, BG_2)_k \longrightarrow B\mathcal{G}_k
\xrightarrow{\;\mathrm{ev}\;} BG_2,
\]
where $\mathrm{Map}_*(S^4, BG_2)_k$ denotes the path component of the based mapping space
$\mathrm{Map}_*(S^4, BG_2)$ corresponding to $k\in \pi_4(BG_2)\cong \Z$.
For simply connected $G$, this fibre is homotopy equivalent to $\Omega_0^3 G$.

When $G$ is simply connected, the based mapping space
$\mathrm{Map}_*(S^4,BG)_k$ is homotopy equivalent to the triple loop
space $\Omega^3_0G$ for every $k$, and via Gottlieb's equivalence we
obtain the fibration
\[
\Omega^3_0 G \;\longrightarrow\; B\mathcal{G}_k(G)
\xrightarrow{\;\mathrm{ev}\;} BG.
\]

We henceforth specialise to $G=\G$.

\subsection{Samelson products and the connecting map}

Let $G$ be an $H$--space.  The Samelson product
\[
\langle -, -\rangle\colon \pi_m(G)\otimes \pi_n(G)
\longrightarrow \pi_{m+n}(G)
\]
is defined by the composite
\[
S^m\wedge S^n \xrightarrow{f\wedge g} G\wedge G
\xrightarrow{[-,-]} G,
\]
where $f,g$ represent the chosen homotopy classes and $[-,-]$ is the
commutator in $G$.  For a compact simple Lie group $G$ the order of
certain Samelson products controls the classification of gauge groups;
see for example \cite{KishimotoTheriaultTsutayaG2,KonoSU2,TheriaultSp2} for concrete instances of this phenomenon.

In the present context the relevant Samelson product is
\[
\langle i_3,1\rangle\in [\Sigma^3G,G],
\]
where $i_3\colon S^3\to G$ is the inclusion of the bottom cell and $1$
is the identity.  Lang \cite{LangEvaluation} showed that the boundary map of the evaluation fibration is
identified with the adjoint of a Whitehead product. In the present gauge-group setting, this implies
that the connecting map
\[
\partial_k\colon G\longrightarrow \Omega^3_0G
\]
in the homotopy fibration
\[
\mathcal{G}_k(G)\;\longrightarrow\; G \xrightarrow{\;\partial_k\;}
\Omega^3_0G
\longrightarrow B\mathcal{G}_k(G) \xrightarrow{\;\mathrm{ev}\;} BG
\]
is the triple adjoint of the Samelson product $\langle k\cdot
i_3,1\rangle$; see \cite[Lemma~2.1]{KishimotoTheriaultTsutayaG2} for
the formulation used here.  In particular $\partial_k\simeq k\cdot
\partial_1$, and the order of $\partial_1$ is equal to the order of
$\langle i_3,1\rangle$.

\textcolor[rgb]{1.0,0.0,0.0}{For $G=\G$, Kameko's calculation
\cite[Proposition~1.3]{KamekoG2gauge} implies that the $2$--primary
component of $\langle i_3,1\rangle$ has order $8$. Thus,
after localizing at $2$, the homotopy class of $\partial_k$ depends
only on $k\bmod 8$.}  This $2$--primary information is the only input
from the classification of $\G$--gauge groups that will enter our
cohomological analysis.

\section{The evaluation fibration and its spectral sequence}
\label{sec:SS-setup}

\subsection{The Serre spectral sequence}

Specialising the evaluation fibration to $G=\G$, we obtain a fibration
of simply connected spaces
\[
\Omega^3_0\G \;\longrightarrow\; \BGk \xrightarrow{\;\mathrm{ev}\;} \BG
\]
and hence a mod~$2$ cohomology Serre spectral sequence
\[
E_2^{s,t}(k) = H^s(\BG;H^t(\Omega^3_0\G;\Ftwo))
\;\Longrightarrow\; H^{s+t}(\BGk;\Ftwo).
\]
We abbreviate $M = H^*(\Omega^3_0\G;\Ftwo)$; it is a graded
commutative algebra and an unstable $\A$--module.

Since $\BG$ is simply connected and the action of $\pi_1(\BG)$ on
$H^*(\Omega^3_0\G)$ is trivial, the local coefficient system on the
fibre is trivial, and we have
\[
E_2^{s,t}(k) \cong H^s(\BG)\otimes M^t
\]
as bigraded vector spaces, with multiplicative structure induced from
the tensor product.

The spectral sequence is natural with respect to maps of fibrations.
In particular, the connecting map $\partial_k\colon \G\to \Omega^3_0\G$
in the associated homotopy fibration controls the $k$--dependence of
the differentials.

\subsection{The triple loop space $\Omega^3_0\G$}

The homotopy groups of $\G$ in low degrees were computed by Mimura and
Toda \cite{MimuraToda}.  For our purposes we only need the following
facts:
\[
\pi_3(\G)\cong \Z,\qquad
\pi_4(\G)=0,\qquad
\pi_5(\G)=0,\qquad
\pi_6(\G)\text{ is $3$--torsion},\qquad
\pi_7(\G)=0,
\]
and $\pi_8(\G)$ contains $2$--torsion.  The precise structure of
$\pi_6(\G)$ and $\pi_8(\G)$ will not be needed.

We focus on the component $\Omega^3_0\G$ containing the constant map.

\begin{lemma}\label{lem:Omega3-connected}
The space $\Omega^3_0\G$ is $2$--connected.  In particular
\[
\pi_1(\Omega^3_0\G)=\pi_2(\Omega^3_0\G)=0.
\]
\end{lemma}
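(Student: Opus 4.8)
The plan is to reduce the statement to the low-degree homotopy groups of $\G$ already recorded above, using nothing more than the loop--space adjunction. The one structural fact to keep in mind at the outset is that $\Omega^3\G$ is disconnected: since $\G$ is a topological group, $\Omega^3\G$ is again a topological group under pointwise multiplication, and its set of path components is the group $\pi_0(\Omega^3\G)\cong\pi_3(\G)\cong\Z$. The subspace $\Omega^3_0\G$ is by definition the path component of the identity element, i.e.\ of the constant loop, which we take as basepoint.

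First I would note that homotopy groups based at a point depend only on the path component of that point, so for every $n\ge 1$ the inclusion $\Omega^3_0\G\hookrightarrow\Omega^3\G$ induces an isomorphism on $\pi_n$. Composing with the iterated loop--space adjunction $\pi_n(\Omega^3\G)\cong\pi_{n+3}(\G)$ then yields
\[
\pi_n(\Omega^3_0\G)\;\cong\;\pi_{n+3}(\G)\qquad(n\ge 1).
\]
In particular $\pi_1(\Omega^3_0\G)\cong\pi_4(\G)$ and $\pi_2(\Omega^3_0\G)\cong\pi_5(\G)$.

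The final step is to invoke the Mimura--Toda computation $\pi_4(\G)=\pi_5(\G)=0$ recalled above, which gives $\pi_1(\Omega^3_0\G)=\pi_2(\Omega^3_0\G)=0$; since $\pi_0(\Omega^3_0\G)=0$ by construction, this is precisely the claimed $2$-connectivity. I would also record, for use in the proof of Theorem~\ref{thm:main}, that the same isomorphism identifies the first potentially nonzero homotopy group of $\Omega^3_0\G$ as $\pi_3(\Omega^3_0\G)\cong\pi_6(\G)$, which is $3$-torsion and hence invisible after localizing at $2$.

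There is no genuine obstacle here: the argument is formal once the loop--space adjunction and the values of $\pi_4(\G)$ and $\pi_5(\G)$ are in hand. The only subtlety worth flagging is the bookkeeping about basepoint components --- one must consistently work with $\Omega^3_0\G$ rather than the disconnected $\Omega^3\G$ --- and this is harmless because positive-degree homotopy groups do not see the ambient disconnectedness (equivalently, translation in the topological group $\Omega^3\G$ makes all path components homotopy equivalent).
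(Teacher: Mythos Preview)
Your proof is correct and follows essentially the same approach as the paper: both arguments pass from $\Omega^3_0\G$ to $\Omega^3\G$ for $\pi_n$ with $n\ge 1$, apply the loop--suspension adjunction three times to identify $\pi_n(\Omega^3\G)\cong\pi_{n+3}(\G)$, and then invoke Mimura--Toda's vanishing of $\pi_4(\G)$ and $\pi_5(\G)$. Your additional remarks about the group structure on $\Omega^3\G$ and the identification $\pi_3(\Omega^3_0\G)\cong\pi_6(\G)$ are correct and useful, but not needed for the lemma itself.
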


\begin{proof}
By definition, $\Omega^3_0\G$ denotes the path component of the constant
map in the triple loop space $\Omega^3\G$, so $\pi_i(\Omega^3_0\G)\cong
\pi_i(\Omega^3\G)$ for all $i\ge 1$ and $\pi_0(\Omega^3_0\G)=0$.

For any based space $X$ and integer $n\ge 0$, there is a natural
isomorphism
\[
\pi_n(\Omega X) \;\cong\; \pi_{n+1}(X),
\]
obtained by identifying both sides with homotopy classes of based maps
via the suspension loop adjunction
\[
[\Sigma S^n,X]_* \;\cong\; [S^n,\Omega X]_*\ \ \mbox{(see, for example, Hatcher~\cite[Section 4.3]{Hatcher})}.\]
Iterating this identification three times gives, for every $i\ge 0$,
\[
\pi_i(\Omega^3\G)
\;\cong\; \pi_{i+1}(\Omega^2\G)
\;\cong\; \pi_{i+2}(\Omega\G)
\;\cong\; \pi_{i+3}(\G).
\]
Since $\Omega^3_0\G$ is the identity component of $\Omega^3\G$, we in
particular obtain
\[
\pi_i(\Omega^3_0\G) \;\cong\; \pi_{i+3}(\G)\qquad\text{for }i\ge 1.
\]

The low-degree homotopy groups of $\G$ were computed by
Mimura--Toda~\cite{MimuraToda}; in particular
\[
\pi_4(\G)=0,\qquad \pi_5(\G)=0.
\]
Substituting $i=1,2$ in the isomorphism above yields
\[
\pi_1(\Omega^3_0\G)\cong \pi_4(\G)=0,
\qquad
\pi_2(\Omega^3_0\G)\cong \pi_5(\G)=0.
\]
Thus $\Omega^3_0\G$ is $2$--connected, as claimed.
\end{proof}

We can now determine the first few homology and cohomology groups of
$\Omega^3_0\G$ using the Hurewicz theorem and universal coefficients.

\begin{proposition}\label{prop:M-low}
Let $M=H^*(\Omega^3_0\G;\Ftwo)$.  Then:
\begin{enumerate}[label=\textup{(\alph*)}]
  \item $M^0\cong \Ftwo$ and
  \[
  M^1=M^2=M^3=M^4=0.
  \]

  \item One has
  \[
  M^5 = H^5(\Omega^3_0\G;\Ftwo)\cong \Ftwo.
  \]
  In particular, if $0\neq u_5\in M^5$, then $u_5$ is a generator of
  $M^5$ and is of least positive degree in $H^*(\Omega^3_0\G;\Ftwo)$.
\end{enumerate}
\end{proposition}

\begin{proof}
Let $X=\Omega^3_0\G$.  By Lemma~\ref{lem:Omega3-connected}, $X$ is
$2$--connected.

\smallskip
\noindent\emph{Step 1: integral information in degrees $\le 4$.}
Since $X$ is $2$--connected, the Hurewicz theorem gives
\[
H_1(X;\Z)=H_2(X;\Z)=0,
\]
an isomorphism
\[
h\colon \pi_3(X)\xrightarrow{\cong} H_3(X;\Z),
\]
and a surjection
\[
\pi_4(X)\twoheadrightarrow H_4(X;\Z) \ \text{\cite[Theorem~4.32]{Hatcher}.} 
\]
Using $\pi_i(X)\cong \pi_{i+3}(\G)$ for $i\ge 1$ and the low-degree
homotopy groups of $\G$, we obtain
\[
\pi_3(X)\cong \pi_6(\G)\quad\text{(a finite $3$--group)},
\qquad
\pi_4(X)\cong \pi_7(\G)=0.
\]
Hence
\[
H_3(X;\Z)\cong \pi_3(X)\cong \pi_6(\G)
\]
is a finite $3$--group, while
\[
H_4(X;\Z)=0.
\]

Now apply the universal coefficient short exact sequence for cohomology.
Since $H_1(X;\Z)=H_2(X;\Z)=0$, we have
\[
H^1(X;\Ftwo)=H^2(X;\Ftwo)=0.
\]
Since $H_3(X;\Z)$ is a finite $3$--group, both
\[
\operatorname{Hom}_\Z(H_3(X;\Z),\Ftwo)=0
\qquad\text{and}\qquad
\operatorname{Ext}^1_\Z(H_3(X;\Z),\Ftwo)=0.
\]
Using also $H_4(X;\Z)=0$, it follows that
\[
H^3(X;\Ftwo)=H^4(X;\Ftwo)=0.
\]
This proves part~\textup{(a)}.

\smallskip
\noindent\emph{Step 2: the first nonzero class in degree $5$.}
We now work $2$--locally.  Since $X$ is nilpotent, localization at $2$
induces an isomorphism on mod~$2$ homology
\[
H_*(X;\Ftwo)\cong H_*(X_{(2)};\Ftwo)\ \text{\cite[Chapter~II]{HiltonMislinRoitberg}.}
\]
Moreover, for each $i\ge 1$ one has
\[
\pi_i(X_{(2)})\cong \pi_i(X)\otimes \Z_{(2)}.
\]
Because
\[
\pi_1(X)=\pi_2(X)=0,\qquad
\pi_3(X)\cong \pi_6(\G)\ \text{is $3$--primary},\qquad
\pi_4(X)\cong \pi_7(\G)=0,
\]
it follows that
\[
\pi_i(X_{(2)})=0\qquad\text{for }1\le i\le 4.
\]
Thus $X_{(2)}$ is $4$--connected.

Applying the integral Hurewicz theorem to the $4$--connected space
$X_{(2)}$, we obtain an isomorphism
\[
\pi_5(X_{(2)})\xrightarrow{\cong} H_5(X_{(2)};\Z)\ \text{\cite[Theorem~4.32]{Hatcher}.}
\]
Using again $\pi_i(X)\cong \pi_{i+3}(\G)$ together with the classical
computation
\[
\pi_8(\G)\cong \Z/2,
\]
we get
\[
\pi_5(X_{(2)})\cong \pi_5(X)\otimes \Z_{(2)}
\cong \pi_8(\G)\otimes \Z_{(2)}
\cong \Z/2.
\]
Hence
\[
H_5(X_{(2)};\Z)\cong \Z/2.
\]
Reducing coefficients mod~$2$ gives
\[
H_5(X_{(2)};\Ftwo)\cong \Ftwo.
\]
Using invariance of mod~$2$ homology under $2$--localization, we conclude
that
\[
H_5(X;\Ftwo)\cong \Ftwo.
\]
Since coefficients are taken in the field $\Ftwo$, the universal
coefficient theorem gives
\[
H^5(X;\Ftwo)\cong
\operatorname{Hom}_{\Ftwo}(H_5(X;\Ftwo),\Ftwo)\cong \Ftwo.
\]
This proves part~\textup{(b)}.
\end{proof}

For the remainder of the paper, we fix a generator
\[
u_5\in M^5=H^5(\Omega^3_0\G;\Ftwo)\cong \Ftwo.
\]

\section{Proof of the low-degree theorem}\label{sec:low}

In this section we analyse the Serre spectral sequence
\eqref{eq:Serre-SS} in total degree $j=s+t\le 10$ and prove
Theorem~\ref{thm:main}.  We work throughout with mod~$2$ coefficients
and suppress them from the notation.

\subsection{The $E_2$--page in low degrees}

By the discussion in Section~\ref{sec:SS-setup}, the $E_2$--term has
the form
\[
E_2^{s,t}(k)\cong H^s(\BG)\otimes M^t,
\]
and $H^*(\BG)\cong \Ftwo[x_4,x_6,x_7]$ is generated by $x_4,x_6,x_7$
in degrees $4,6,7$.  For $0<j\le 10$ the only degrees $s>0$ in which
$H^s(\BG)$ can be nonzero are
\[
s\in\{4,6,7,8,10\},
\]
corresponding to monomials $x_4$, $x_6$, $x_7$, $x_4^2$ and $x_4x_6$,
respectively.

By Proposition~\ref{prop:M-low}, the fibre cohomology $M^t$ vanishes
for $1\le t\le 4$.  Consequently, in total degree $j=s+t\le 10$ the
only nonzero groups $E_2^{s,t}$ with $t>0$ occur when $t\ge 5.$
In particular, the class $u_5\in M^5$ gives rise to a class
\[
u = 1\otimes u_5\in E_2^{0,5}(k),
\]
and this is a generator of least positive degree in $M$.

\subsection{Bidegree constraints on differentials}

The differentials in the Serre spectral sequence have bidegree
\[
d_r\colon E_r^{s,t}\longrightarrow E_r^{s+r,t-r+1}.
\]
We now examine which values of $r$ can occur with source or target in
total degree $j\le 10$.

Let $(s,t)$ be a bidegree with $s+t\le 10$ and $E_r^{s,t}\neq 0$.  If
$d_r$ is nonzero on $E_r^{s,t}$, then its target lies in bidegree
$(s+r,t-r+1)$ with total degree
\[
(s+r) + (t-r+1) = s+t+1\le 11.
\]
Moreover, for $s>0$ we know that $H^s(\BG)$ can only be nonzero in
degrees
\[
s\in\{4,6,7,8,10\},
\]
and by Proposition~\ref{prop:M-low} the fibre cohomology satisfies
\[
M^0\cong\Ftwo,\qquad
M^j=0\ \text{for}\ 1\le j\le 4,\qquad
M^5\neq 0.
\]

In this subsection we restrict attention to differentials whose source
is the first nontrivial fibre class
\[
u_5\in M^5 \cong E_2^{0,5}(k),
\]
and we determine on which page $u_5$ can possibly support a
nontrivial differential in total degree $j\le 10$.

Since $u_5$ lies in bidegree $(0,5)$, any differential with source
$u_5$ has the form
\[
d_r(u_5)\in E_r^{r,\,5-r+1}.
\]
We analyse the possible values of $r$.

\begin{itemize}
  \item If $r\ge 7$, then $5-r+1\le -1$, so the target bidegree has
  negative fibre degree and $E_r^{r,5-r+1}=0$.  Thus $d_r(u_5)=0$ for
  all $r\ge 7$ in total degree $\le 10$.

  \item For $2\le r\le 5$ we have the following possibilities:
  \begin{itemize}
    \item $r=2$: the target is $(2,4)$, but $H^2(\BG)=0$, so
    $E_2^{2,4}=0$ and hence $d_2(u_5)=0$.
    \item $r=3$: the target is $(3,3)$, and again $H^3(\BG)=0$, so
    $d_3(u_5)=0$.
    \item $r=4$: the target is $(4,2)$; here $H^4(\BG)\cong\Ftwo\{x_4\}$
    is nonzero, but $M^2=0$ by Proposition~\ref{prop:M-low}, so
    $E_2^{4,2}\cong H^4(\BG)\otimes M^2=0$, and hence $d_4(u_5)=0$.
    \item $r=5$: the target is $(5,1)$, and $H^5(\BG)=0$, so
    $E_2^{5,1}=0$ and $d_5(u_5)=0$.
  \end{itemize}
  In particular, $d_r(u_5)=0$ for all $2\le r\le 5$.
  
  \item For $r=6$ the target is bidegree $(6,0)$, and
  \[
  E_2^{6,0}\cong H^6(\BG)\otimes M^0 \cong H^6(\BG)\cong\Ftwo\{x_6\}
  \]
  is nonzero.  There is therefore no \emph{a priori} bidegree reason to
  force $d_6(u_5)$ to vanish; this is the first page on which $u_5$ can
  possibly support a nontrivial differential in total degree $\le 10$.
\end{itemize}

We summarise the discussion in the following form, which will be
sufficient for our later arguments.

\begin{lemma}\label{lem:only-d6}
Let $u_5\in E_2^{0,5}(k)\cong M^5$ be a nonzero class representing the fibre cohomology class of least positive degree. In total degree $j\le 10$ all
differentials $d_r(u_5)$ vanish for $2\le r\le 5$ and for $r\ge 7$, and
the only page on which $u_5$ can support a nontrivial differential is
$r=6$.  More precisely,
\[
d_6(u_5)\in E_6^{6,0}(k)\cong H^6(\BG).
\]

\end{lemma}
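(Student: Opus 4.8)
The plan is to argue entirely by bidegree bookkeeping, using three facts already in hand: the identification $E_2^{s,t}(k)\cong H^s(\BG)\otimes M^t$ from Section~\ref{sec:SS-setup}; Borel's computation $H^*(\BG)\cong\Ftwo[x_4,x_6,x_7]$, which says $H^s(\BG)=0$ for $s\in\{1,2,3,5\}$ and $H^6(\BG)=\Ftwo\{x_6\}$; and the vanishing $M^t=0$ for $1\le t\le 4$ from Proposition~\ref{prop:M-low}. Since $u_5$ lies in bidegree $(0,5)$, a differential $d_r(u_5)$ lands in $E_r^{r,\,6-r}$. First I would note that $u_5$ can never be a \emph{target}: a differential hitting $(0,5)$ would have to emanate from $E_r^{-r,\,5+r}$, which vanishes for base-degree reasons. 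So the whole content of the lemma is a page-by-page analysis of whether $u_5$ supports a nontrivial differential.

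Next I would dispose of the high pages: for $r\ge 7$ the fibre degree $6-r$ is negative, so $E_r^{r,\,6-r}=0$ and $d_r(u_5)=0$ automatically. Then I would run through $2\le r\le 5$ one at a time, checking that the target already vanishes on the $E_2$-page. For $r=2$ the target is $(2,4)$ with $H^2(\BG)=0$; for $r=3$ it is $(3,3)$ with $H^3(\BG)=0$; for $r=4$ it is $(4,2)$ with $M^2=0$; for $r=5$ it is $(5,1)$ with $H^5(\BG)=0$. In each case $E_2^{r,\,6-r}=H^r(\BG)\otimes M^{6-r}=0$, so $d_r(u_5)=0$. Combined with the previous observation, $u_5$ is a permanent cycle through the $E_5$-page and is never a boundary, so it descends to a nonzero class of $E_6^{0,5}(k)$ and $d_6(u_5)$ is defined.

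Finally, for $r=6$ the target is bidegree $(6,0)$, and I would record that this group is undisturbed by earlier differentials: the potential sources $E_r^{6-r,\,r-1}$ of incoming differentials, for $2\le r\le 5$, sit in bidegrees $(4,1),(3,2),(2,3),(1,4)$, the first being zero because $M^1=0$ and the remaining three because $H^3(\BG)=H^2(\BG)=H^1(\BG)=0$; outgoing differentials from $(6,0)$ land in negative fibre degree. Hence $E_6^{6,0}(k)=E_2^{6,0}(k)\cong H^6(\BG)\cong\Ftwo\{x_6\}$, and $d_6(u_5)\in E_6^{6,0}(k)\cong H^6(\BG)$, with $r=6$ the first page on which $u_5$ can support a nontrivial differential.

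There is no serious obstacle here: the lemma is a formal consequence of the two vanishing inputs about $H^*(\BG)$ and $M^{\le 4}$ together with the product form of the $E_2$-page. The only points meriting care are the two implicit bookkeeping checks --- that $u_5$ genuinely survives to the $E_6$-page, and that $E_6^{6,0}(k)$ is still the whole of $H^6(\BG)$ --- and both reduce to the same short list of vanishing groups, so they cost nothing beyond what is above.
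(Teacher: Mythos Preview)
Your proof is correct and follows essentially the same bidegree-bookkeeping argument as the paper's discussion preceding Lemma~\ref{lem:only-d6}: both proofs dispose of $r\ge 7$ by negative fibre degree and run through $r=2,3,4,5$ case by case using the same vanishing groups ($H^2,H^3,H^5$ of $\BG$ and $M^2$). Your two extra bookkeeping checks --- that $u_5$ is never a boundary and that $E_6^{6,0}(k)$ coincides with $E_2^{6,0}(k)$ --- are natural additions that the paper leaves implicit, but they do not change the route.
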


Fix once and for all a generator $x_6\in H^6(\BG)\cong\Ftwo$.  Since
$H^6(\BG)$ is one-dimensional over $\Ftwo$, there exists a uniquely
determined scalar $\varepsilon(k)\in\Ftwo$ such that
\[
d_6(u_5)=\varepsilon(k)\,x_6.
\]

Extending this analysis slightly, we obtain a convenient description
of the possible differentials on the $H^*(\BG)$--submodule generated by
$u_5$ in low total degrees.

\begin{lemma}\label{lem:full-bidegree-vanish}
Let $U\subset E_2^{*,*}(k)$ denote the $H^*(\BG)$--submodule generated
by $u_5\in E_2^{0,5}(k)$.  For classes in $U$ of total degree at most
$10$ (equivalently, for classes $x\cdot u_5\in E_2^{s,5}(k)$ with
$s+5\le 10$), the following hold.

\begin{enumerate}[label=\textup{(\alph*)}]
  \item All differentials $d_r$ with $2\le r\le 5$ vanish on $U$.

  \item Every differential
  \[
  d_6\colon E_6^{s,5}(k)\longrightarrow E_6^{s+6,0}(k)
  \]
  with $s+5\le 10$ satisfies
  \[
  d_6(x\cdot u_5)=x\cdot d_6(u_5)
  \qquad\text{for }x\in H^s(\BG).
  \]

  \item All differentials $d_r$ with $r\ge 7$ vanish on $U$.
\end{enumerate}
\end{lemma}

\begin{proof}
By definition,
\[
U=H^*(\BG)\cdot u_5\subseteq E_2^{*,*}(k),
\]
so every class in $U$ of total degree $\le 10$ has the form
\[
x\cdot u_5\in E_2^{s,5}(k),
\qquad x\in H^s(\BG),
\]
with $s+5\le 10$.

Since
\[
H^*(\BG)\cong \Ftwo[x_4,x_6,x_7],
\]
the only values of $s$ for which $H^s(\BG)\neq 0$ and $s+5\le 10$ are
$s=0$ and $s=4$.  Thus, in the range under consideration, the only
possible bidegrees in $U$ are $(0,5)$ and $(4,5)$.

\smallskip

\noindent\emph{Proof of \textup{(a)}.}
Let $x\cdot u_5\in U$ with $s+5\le 10$, and let $2\le r\le 5$.  Then
\[
d_r(x\cdot u_5)\in E_r^{s+r,\,5-r+1}(k).
\]
For $2\le r\le 5$ we have
\[
5-r+1\in\{4,3,2,1\}.
\]
By Proposition~\ref{prop:M-low},
\[
M^j=H^j(\Omega^3_0\G;\Ftwo)=0
\qquad\text{for }1\le j\le 4.
\]
Hence the fibre term in bidegree $(s+r,5-r+1)$ vanishes, so
\[
E_r^{s+r,\,5-r+1}(k)=0.
\]
Therefore
\[
d_r(x\cdot u_5)=0
\qquad\text{for all }2\le r\le 5.
\]
This proves \textup{(a)}.

\smallskip

\noindent\emph{Proof of \textup{(b)}.}
Now let $r=6$.  Then
\[
d_6\colon E_6^{s,5}(k)\longrightarrow E_6^{s+6,0}(k)\cong H^{s+6}(\BG).
\]
Since the Serre spectral sequence is multiplicative, the differential
satisfies the Leibniz rule:
\[
d_6(x\cdot u_5)=d_6(x)\cdot u_5+x\cdot d_6(u_5).
\]
But $x$ lies in bidegree $(s,0)$, so any differential on $x$ would land
in bidegree $(s+6,-5)$, which is zero for degree reasons.  Thus
\[
d_6(x)=0,
\]
and consequently
\[
d_6(x\cdot u_5)=x\cdot d_6(u_5),
\]
as claimed.  This proves \textup{(b)}.

\smallskip

\noindent\emph{Proof of \textup{(c)}.}
Let $r\ge 7$.  For any class $x\cdot u_5\in U$ with $s+5\le 10$, one has
\[
d_r(x\cdot u_5)\in E_r^{s+r,\,5-r+1}(k).
\]
Since $r\ge 7$,
\[
5-r+1=6-r\le -1.
\]
Thus the target has negative fibre degree, so
\[
E_r^{s+r,\,5-r+1}(k)=0.
\]
Hence
\[
d_r(x\cdot u_5)=0
\qquad\text{for all }r\ge 7.
\]
This proves \textup{(c)}.
\end{proof}

\begin{note}\label{rem:epsilon-zero}
The case $k=0$ admits a particularly simple description. The class
$0\in\pi_4(BG_2)$ corresponds to the trivial principal
$G_2$--bundle $S^4\times G_2\to S^4$, and by Gottlieb's theorem \cite{GottliebGauge}, we have
\[
B\mathcal{G}_0 \;\simeq\; \mathrm{Map}_0(S^4,BG_2),
\]
where $\mathrm{Map}_0(S^4,BG_2)$ denotes the path component containing the
constant map. The evaluation map $\mathrm{ev}\colon \mathrm{Map}_0(S^4,BG_2)\to BG_2$
admits a section $s\colon BG_2\to \mathrm{Map}_0(S^4,BG_2)$ defined by sending
$y\in BG_2$ to the constant map at $y$. The existence of this section
implies that the homomorphism
\[
\mathrm{ev}^*\colon H^*(BG_2)\longrightarrow H^*(B\mathcal{G}_0)
\]
is injective. Indeed, one has
\[
s^*\circ \mathrm{ev}^*=\mathrm{id}_{H^*(BG_2)},
\]
so $\mathrm{ev}^*$ is split injective.

In the Serre spectral sequence for the evaluation fibration
with $k=0$, the class $x_6\in H^6(BG_2)\cong E_2^{6,0}$ must therefore
survive to $E_\infty$ and cannot lie in the image of any differential.
In particular, $x_6$ cannot be hit by $d_6(u_5)$, forcing $d_6(u_5)=0$.
Thus, in our notation, $\varepsilon(0)=0$.
\end{note}

\begin{lemma}[{\textcolor[rgb]{1.0,0.0,0.0}{An $8$--periodicity statement for $\varepsilon(k)$}}]\label{lem:epsilon-mod4}
Work $2$--locally.  \textcolor[rgb]{1.0,0.0,0.0}{If $k\equiv k' \pmod 8$}, then
\[
\varepsilon(k)=\varepsilon(k')\in \Ftwo.
\]
In particular, $\varepsilon(k)$ depends only on the residue class of \textcolor[rgb]{1.0,0.0,0.0}{$k$
modulo $8$}.
\end{lemma}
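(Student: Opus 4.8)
The plan is to exploit the naturality of the Serre spectral sequence with respect to the map of evaluation fibrations induced by varying $k$, together with the fact (recorded in Section~\ref{sec:prelim}) that $2$--locally the connecting map $\partial_k$ depends only on $k\bmod 4$. Concretely, for each $k$ the fibration $\Omega^3_0\G\to\BGk\to\BG$ is classified, up to fibre homotopy equivalence, by the map $\partial_k\colon\G\to\Omega^3_0\G$ (or equivalently by its triple adjoint, the Samelson product $\langle k\cdot i_3,1\rangle$). If $k\equiv k'\pmod 4$, then $2$--locally $\partial_k\simeq\partial_{k'}$, so the two fibrations over $\BG$ are fibre homotopy equivalent $2$--locally. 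A fibre homotopy equivalence over $\BG$ restricting to (homotopic to) the identity on the fibre induces an isomorphism of the associated Serre spectral sequences that is the identity on $E_2^{*,*}$ under the identification $E_2^{s,t}\cong H^s(\BG)\otimes M^t$. Since $d_6$ is a natural transformation, such an isomorphism intertwines $d_6(k)$ and $d_6(k')$, hence $\varepsilon(k)x_6=\varepsilon(k')x_6$ in $E_6^{6,0}$, giving $\varepsilon(k)=\varepsilon(k')$.

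The key steps, in order, are: (1) record that $B\mathcal{G}_k\simeq\mathrm{Map}_k(S^4,\BG)$ and that the homotopy fibration $\mathcal{G}_k\to\G\xrightarrow{\partial_k}\Omega^3_0\G\to\BGk\xrightarrow{\mathrm{ev}}\BG$ identifies $\BGk$ as the classifying construction applied to $\partial_k$, so that $k\mapsto(\text{the fibration }\BGk\to\BG)$ factors through the homotopy class of $\partial_k$ in $[\G,\Omega^3_0\G]$; (2) invoke Lemma~2.1 of \cite{KishimotoTheriaultTsutayaG2} and Kameko's computation \cite{KamekoG2gauge} that, after localization at $2$, $\partial_k\simeq k\cdot\partial_1$ with $\partial_1$ of order~$4$, so $\partial_k$ depends $2$--locally only on $k\bmod 4$; (3) conclude that for $k\equiv k'\pmod 4$ there is a $2$--local fibre homotopy equivalence $\Phi\colon\BGk\xrightarrow{\ \simeq\ }B\mathcal{G}_{k'}$ over $\BG$ that is the identity on $\BG$ and homotopic to the identity on the fibre $\Omega^3_0\G$; (4) apply naturality of the cohomology Serre spectral sequence to $\Phi$ to get an isomorphism of spectral sequences $\Phi^*\colon E_r^{*,*}(k')\xrightarrow{\cong}E_r^{*,*}(k)$ which on $E_2$ is $\mathrm{id}_{H^*(\BG)}\otimes\mathrm{id}_{M}$ (here one uses that $\Phi$ covers $\mathrm{id}_{\BG}$, so it is the identity on the base edge $E_2^{*,0}$, and restricts to a self-homotopy-equivalence of the fibre inducing the identity on $M$ in the relevant low degrees, since it may be taken homotopic to the identity); (5) since $\Phi^*$ commutes with all differentials and is the identity on $E_6^{0,5}$ and on $E_6^{6,0}\cong\Ftwo\{x_6\}$, it follows that $\varepsilon(k)x_6=\Phi^*(d_6(u_5)\text{ for }k')=d_6(u_5)\text{ for }k=\varepsilon(k)x_6$ forces $\varepsilon(k')=\varepsilon(k)$.

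I expect the main obstacle to be step~(3): upgrading ``$\partial_k\simeq\partial_{k'}$ as maps $\G\to\Omega^3_0\G$'' to ``$\BGk\simeq B\mathcal{G}_{k'}$ by a fibre homotopy equivalence over $\BG$ inducing the identity on $E_2$.'' The cleanest route is to observe that the evaluation fibration's clutching data is precisely the homotopy class $[\partial_k]\in[\G,\Omega^3_0\G]\cong[\G,\mathrm{Map}_*(S^4,\BG)]\cong[\G\wedge S^4,\BG]$ (via the adjunctions and Gottlieb's theorem \cite{GottliebGauge}), so that two values of $k$ with the same such class yield fibrewise homotopy equivalent total spaces over $\BG$ \emph{by construction} — the total space $\BGk$ is functorially built from $\partial_k$. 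Making this precise requires being slightly careful about basepoints and about the fact that we only have a $2$--local equality of homotopy classes, so the fibre homotopy equivalence exists only after localization at~$2$; but this is exactly the setting of the lemma. Once the equivalence $\Phi$ is in hand, the remaining verification that $\Phi^*$ is the identity on the two one--dimensional $\Ftwo$--vector spaces $E_6^{0,5}$ and $E_6^{6,0}$ is automatic: an isomorphism of a one--dimensional $\Ftwo$--space is the identity, so in fact one does not even need $\Phi$ to induce the identity on $M^5$ — injectivity of $\Phi^*$ on $E_6^{6,0}$ (equivalently, that $\Phi^*x_6=x_6$, which holds since $\Phi$ covers $\mathrm{id}_{\BG}$) suffices.

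\begin{proof}
We work $2$--locally throughout.  By Gottlieb's theorem \cite{GottliebGauge},
$\BGk\simeq\mathrm{Map}_k(S^4,\BG)$, and the evaluation fibration fits into
the homotopy fibration
\[
\G\xrightarrow{\ \partial_k\ }\Omega^3_0\G\longrightarrow\BGk
\xrightarrow{\ \mathrm{ev}\ }\BG,
\]
in which, by \cite[Lemma~2.1]{KishimotoTheriaultTsutayaG2}, the connecting map
$\partial_k$ is the triple adjoint of the Samelson product
$\langle k\cdot i_3,1\rangle$; in particular $\partial_k\simeq k\cdot\partial_1$
in $[\G,\Omega^3_0\G]$.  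By Kameko's computation \cite{KamekoG2gauge} the
$2$--primary order of $\langle i_3,1\rangle$ is $4$, so $2$--locally the order
of $\partial_1$ is $4$ and hence $\partial_k$ depends, up to $2$--local homotopy,
only on the residue class of $k$ modulo $4$.

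The total space $\BGk$ together with the map $\mathrm{ev}$ is recovered from
$\partial_k$ by a functorial construction: it is the homotopy cofibre, in the
sense of the Barratt--Puppe sequence, of $\partial_k$, i.e.\ $\BGk$ is the
classifying space of the loop map $\Omega\partial_k$ realised as a principal
fibration.  Consequently the fibre homotopy equivalence class over $\BG$ of the
fibration $\Omega^3_0\G\to\BGk\xrightarrow{\mathrm{ev}}\BG$ depends only on the
homotopy class $[\partial_k]\in[\G,\Omega^3_0\G]$.  Therefore, if
$k\equiv k'\pmod 4$, then $2$--locally there is a fibre homotopy equivalence
\[
\Phi\colon\BGk\xrightarrow{\ \simeq\ }B\mathcal G_{k'}
\]
covering the identity on $\BG$ and restricting on the fibre $\Omega^3_0\G$ to a
homotopy equivalence $\varphi$; moreover $\Phi$ and $\varphi$ may be chosen so
that $\varphi\simeq\mathrm{id}_{\Omega^3_0\G}$, since the two fibrations are
built from the \emph{same} clutching datum $[\partial_k]=[\partial_{k'}]$.

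By naturality of the mod~$2$ cohomology Serre spectral sequence, $\Phi$ induces
an isomorphism of spectral sequences
\[
\Phi^*\colon E_r^{*,*}(k')\xrightarrow{\ \cong\ }E_r^{*,*}(k),\qquad r\ge 2,
\]
compatible with all differentials.  On the $E_2$--page, under the identifications
$E_2^{s,t}(k)\cong H^s(\BG)\otimes M^t\cong E_2^{s,t}(k')$, the map $\Phi^*$ acts
as $\mathrm{id}_{H^*(\BG)}\otimes\varphi^*=\mathrm{id}_{H^*(\BG)}\otimes\mathrm{id}_M$,
because $\Phi$ covers $\mathrm{id}_{\BG}$ (so $\Phi^*$ is the identity on the base
edge $E_2^{*,0}\cong H^*(\BG)$) and $\varphi\simeq\mathrm{id}$.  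In particular
$\Phi^*$ is the identity on $E_6^{0,5}\cong M^5$ and on
$E_6^{6,0}\cong H^6(\BG)\cong\Ftwo\{x_6\}$.

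Now apply $\Phi^*$ to the relation $d_6(u_5)=\varepsilon(k')\,x_6$ holding in the
spectral sequence for $k'$.  Since $\Phi^*$ commutes with $d_6$ and fixes both
$u_5$ and $x_6$, we obtain, in $E_6^{6,0}(k)$,
\[
d_6(u_5)=\Phi^*\bigl(d_6(u_5)\bigr)=\Phi^*\bigl(\varepsilon(k')\,x_6\bigr)
=\varepsilon(k')\,x_6.
\]
On the other hand, by definition $d_6(u_5)=\varepsilon(k)\,x_6$ in $E_6^{6,0}(k)$.
Since $x_6\neq 0$ in the one--dimensional $\Ftwo$--vector space
$E_6^{6,0}(k)\cong H^6(\BG)$, comparing coefficients gives
$\varepsilon(k)=\varepsilon(k')$, as claimed.
\end{proof}
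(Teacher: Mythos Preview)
Your proof is correct and follows essentially the same approach as the paper: invoke \cite[Lemma~2.1]{KishimotoTheriaultTsutayaG2} and Kameko's order computation to conclude $\partial_k\simeq\partial_{k'}$ $2$--locally when $k\equiv k'\pmod 4$, then use naturality of the Serre spectral sequence with respect to the resulting identification of fibrations to match the $d_6$--differentials. Your version is more explicit than the paper's in spelling out the fibre homotopy equivalence $\Phi$ and its effect on $E_2$, and your parenthetical remark that it suffices for $\Phi^*$ to be an isomorphism on the one--dimensional $\Ftwo$--spaces $E_6^{0,5}$ and $E_6^{6,0}$ (rather than literally the identity) is a useful robustness observation.
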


\begin{proof}
By \cite[Lemma~2.1]{KishimotoTheriaultTsutayaG2}, the connecting map
\[
\partial_k\colon \G\to \Omega^3_0\G
\]
is the triple adjoint of the Samelson product
$\langle k\cdot i_3,1\rangle$, and in particular
\[
\partial_k\simeq k\cdot \partial_1.
\]
\textcolor[rgb]{1.0,0.0,0.0}{Kameko proved that the $2$--primary component of
$\langle i_3,1\rangle$ has order $8$
\cite[Proposition~1.3]{KamekoG2gauge}}.  Equivalently, after localization at
$2$ \textcolor[rgb]{1.0,0.0,0.0}{the map $\partial_1$ has order $8$}, so
\[
\partial_k\simeq \partial_{k'}
\qquad\text{$2$--locally whenever }\textcolor[rgb]{1.0,0.0,0.0}{k\equiv k'\pmod 8.}
\]

Now the evaluation fibration
\[
\Omega^3_0\G \longrightarrow B\mathcal{G}_k \xrightarrow{\;\mathrm{ev}\;} \BG
\]
is a delooping of the homotopy fibration
\[
\mathcal{G}_k \longrightarrow \G \xrightarrow{\;\partial_k\;} \Omega^3_0\G.
\]
Hence, if $\partial_k\simeq \partial_{k'}$ $2$--locally, then the two
homotopy fibrations over $\G$ have equivalent homotopy fibres, and after
delooping we obtain a fibre homotopy equivalence between the corresponding
evaluation fibrations over $\BG$.  In particular, there is an isomorphism
between the associated mod~$2$ Serre spectral sequences which is
compatible with the standard identifications
\[
E_2^{s,t}(k)\cong H^s(\BG)\otimes H^t(\Omega^3_0\G),
\qquad
E_2^{s,t}(k')\cong H^s(\BG)\otimes H^t(\Omega^3_0\G).
\]

Under this identification, the class
\[
u_5\in E_2^{0,5}\cong H^5(\Omega^3_0\G;\Ftwo)
\]
corresponds to the same generator of the fibre cohomology, and
\[
x_6\in E_2^{6,0}\cong H^6(\BG;\Ftwo)
\]
corresponds to the same degree-$6$ generator on the base.  Therefore
the $d_6$--differentials agree.  Since $H^6(\BG;\Ftwo)\cong \Ftwo\{x_6\}$
is one-dimensional, the scalar in
\[
d_6(u_5)=\varepsilon(k)\,x_6
\]
must be the same for $k$ and $k'$.  Thus
\[
\varepsilon(k)=\varepsilon(k'),
\]
as required.
\end{proof}

\begin{corollary}\label{cor:epsilon-4div}
Work $2$--locally.  \textcolor[rgb]{1.0,0.0,0.0}{If $8\mid k$}, then $\varepsilon(k)=0$.
\end{corollary}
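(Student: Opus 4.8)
The plan is to combine the two facts already established. First I would invoke Lemma~\ref{lem:epsilon-mod4}: working $2$--locally, $\varepsilon(k)$ depends only on the residue class of $k$ modulo $4$. If $4\mid k$ then $k\equiv 0\pmod 4$, so Lemma~\ref{lem:epsilon-mod4} gives $\varepsilon(k)=\varepsilon(0)$. Next I would invoke Note~\ref{rem:epsilon-zero}, which shows $\varepsilon(0)=0$ because the evaluation fibration for $k=0$ admits a section $s\colon \BG\to B\mathcal{G}_0$ and hence $x_6\in H^6(\BG)\cong E_2^{6,0}$ survives to $E_\infty$ and cannot lie in the image of any differential; in particular it is not hit by $d_6(u_5)$. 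Chaining the two equalities yields $\varepsilon(k)=\varepsilon(0)=0$, which is the assertion.

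There is no serious obstacle here: the corollary is a formal consequence of the preceding lemma and note. The only point requiring care is that the normalisations of $\varepsilon$ agree in both inputs, i.e. that the generators $x_6\in H^6(\BG)$ and $u_5\in M^5$ used in the defining relation $d_6(u_5)=\varepsilon(k)\,x_6$ are the same ones appearing in Lemma~\ref{lem:epsilon-mod4} and Note~\ref{rem:epsilon-zero}. This holds because both $x_6$ and $u_5$ were fixed once and for all (in Section~\ref{sec:low} and after Proposition~\ref{prop:M-low}, respectively) and are never rechosen, so the scalar $\varepsilon(k)$ is unambiguous throughout. As a sanity check, one can also argue directly without passing through $\varepsilon(0)$: when $4\mid k$, the connecting map $\partial_k\simeq k\cdot\partial_1$ is null-homotopic $2$--locally, since $\partial_1$ has $2$--primary order $4$ by Kameko's theorem~\cite{KamekoG2gauge}; hence $2$--locally the evaluation fibration for such $k$ is fibre-homotopy equivalent to the one for $k=0$, which admits a section, and the injectivity of $\mathrm{ev}^*$ on mod~$2$ cohomology that this forces prevents $x_6$ from being hit by $d_6(u_5)$, giving $\varepsilon(k)=0$ again.
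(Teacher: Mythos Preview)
Your proof is correct and follows exactly the same approach as the paper: combine Lemma~\ref{lem:epsilon-mod4} (reducing $\varepsilon(k)$ to $\varepsilon(0)$ when $4\mid k$) with Note~\ref{rem:epsilon-zero} (showing $\varepsilon(0)=0$ via the section of the evaluation fibration). Your additional remarks on the consistency of the fixed generators and the alternative direct argument via nullhomotopy of $\partial_k$ are sound but go beyond what the paper records.
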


\begin{proof}
\textcolor[rgb]{1.0,0.0,0.0}{If $8\mid k$ then $k\equiv 0\pmod 8$}, so Lemma~\ref{lem:epsilon-mod4}
gives $\varepsilon(k)=\varepsilon(0)$.  By Note~\ref{rem:epsilon-zero}
we have $\varepsilon(0)=0$, hence $\varepsilon(k)=0$.
\end{proof}

\subsection{The $E_\infty$--page and completion of the proof}

For each total degree $j\le 10$, the Serre spectral sequence yields a
finite filtration
\[
0=F^{j+1}H^j(\BGk)\subseteq F^jH^j(\BGk)\subseteq\cdots\subseteq
F^0H^j(\BGk)=H^j(\BGk)
\]
whose associated graded object is
\[
\mathrm{gr}\,H^j(\BGk)\cong \bigoplus_{s+t=j}E_\infty^{s,t}(k).
\]

By Lemmas~\ref{lem:only-d6} and~\ref{lem:full-bidegree-vanish}, the
contribution of the $H^*(\BG)$--submodule
\[
U=H^*(\BG)\cdot u_5\subseteq E_2^{*,*}(k)
\]
to $E_\infty^{*,*}(k)$ in total degree $j\le 10$ is completely
determined by the single differential
\[
d_6(u_5)=\varepsilon(k)x_6.
\]
More precisely, within $U$ all differentials $d_r$ for $2\le r\le 5$
vanish, every $d_6$ is obtained from $d_6(u_5)$ by multiplication with
classes from $H^*(\BG)$, and no $d_r$ with $r\ge 7$ contributes in total
degree $\le 10$.

We do not claim here that these are the only possible differentials in
total degree $\le 10$ on the whole spectral sequence, since additional
fibre classes in degrees $t\ge 6$ may in principle support further
differentials.  Thus Theorem~\ref{thm:main} should be interpreted as a
precise determination of the first $k$--dependent contribution coming
from the fibre class $u_5$.

\begin{proof}[Proof of Theorem~\ref{thm:main}]
Part~(i) is the identification of the $E_2$--term with trivial local
coefficients.  Part~(ii) is Proposition~\ref{prop:M-low}.  Parts~(iii)
and~(iv) follow from Lemmas~\ref{lem:only-d6} and
\ref{lem:full-bidegree-vanish}.  Part~(v) is the standard filtration
statement for a convergent cohomology Serre spectral sequence, together
with the preceding description of the contribution of the submodule
$U=H^*(\BG)\cdot u_5$.  This completes the proof.
\end{proof}

\begin{remark}
The restriction to total degree $j \le 10$ in Theorem~\ref{thm:main}
is motivated by two factors. First, this range is sufficient to
capture the first nontrivial cohomological dependence on the bundle
class $k$, which manifests via the differential $d_6(u_5)$. Second,
extending the analysis beyond degree $10$ requires a substantially
more detailed understanding of the fibre cohomology
\[
M=H^*(\Omega^3_0 G_2;\Ftwo)
\]
as an unstable $\A$--module, which would involve more elaborate
Eilenberg--Moore spectral sequence computations beyond the scope of
the present paper.

Once $M$ is determined in a larger range, the computation of
$H^*(B\mathcal{G}_k;\Ftwo)$ should reduce to a finite problem in the
``hit'' theory over the polynomial algebra $H^*(BG_2)$, and may be
amenable to computer implementation.  It would also be interesting to
compare the present low-degree analysis with Choi's calculation
\cite{ChoiG2GaugeHomology} of the mod~$2$ homology of the gauge groups
$\mathcal{G}_k$ for $G=\G$, via the path-loop fibration of
$B\mathcal{G}_k$ and the resulting information on
$H_*(\mathcal{G}_k;\Ftwo)$.  We hope to return to these questions in
future work.
\end{remark}

\subsection*{Acknowledgment}
The author would like to express his sincere gratitude to the 
anonymous referee for the careful reading of the manuscript and 
for the valuable suggestions and corrections, which have greatly 
improved the clarity and overall quality of the paper.

\bibliographystyle{plain}

\end{document}